\documentclass[a4paper]{amsart}
\usepackage[utf8]{inputenc}
\usepackage{amsmath}
\usepackage{amsfonts}
\usepackage{amssymb}
\usepackage{lmodern}
\usepackage{thmtools}
\usepackage{amsthm}
\usepackage{cancel}
\usepackage{todonotes}
\usepackage{graphicx}
\usepackage{hyperref}
\usepackage[capitalise]{cleveref}
\usepackage{tikz}
\usepackage{MnSymbol}
\usepackage{breakcites}
\usepackage{multicol}
\usepackage{setspace}
\usepackage[backend=bibtex,
                style=ieee,
                doi=false,
                url=false,
                isbn=false
                ]{biblatex}
\addbibresource{basis.bib}
\onehalfspacing
\Crefname{enumi}{item}{items}
\creflabelformat{enumi}{#2(#1)#3}
\usetikzlibrary{graphs}

\theoremstyle{plain}
\declaretheorem[name={Theorem}]{theorem}
\declaretheorem[name={Proposition}, sibling=theorem]{proposition}

\declaretheorem[name={Definition}, style=definition]{definition}
\declaretheorem[name={Corollary}, sibling=theorem]{corollary}
\declaretheorem[name={Question},style=remark]{question}

\newcommand{\set}[1]{\{{#1}\}}

\DeclareMathOperator{\Spec}{DgSp}

\DeclareMathOperator{\op}{open}

\newcommand{\A}{\mathcal{A}}
\newcommand{\G}{\mathcal{G}}
\newcommand{\B}{\mathcal{B}}
\newcommand{\C}{\mathcal{C}}
\renewcommand{\S}{\mathcal{S}}

\newcommand{\LR}{\Leftrightarrow}
\newcommand{\ra}{\rightarrow}

\newcommand{\embeds}{\hookrightarrow}
\newcommand{\restrict}[1][]{{\upharpoonright}_{#1}}
\newcommand{\mc}[1]{\mathcal{#1}}
\newcommand{\mrm}[1]{\mathrm{#1}}
\newcommand{\mbf}[1]{\mathbf{#1}}
\newcommand{\mf}[1]{\mathfrak{#1}}

\newcommand{\biemb}{\approx}
\renewcommand{\phi}{\varphi}

\newcommand{\onecomp}{\tikz[inner sep=0pt,baseline=-\the\dimexpr\fontdimen22\textfont2\relax]{
  \node[fill, circle, minimum size=2pt] (0) at (0,0) {};
}}
\newcommand{\twocomp}{\tikz[inner sep=0pt,baseline=-\the\dimexpr\fontdimen22\textfont2\relax]{
  \node[fill, circle, minimum size=2pt] (0) at (0,0) {};
  \node[fill, circle, minimum size=2pt] (1) at (0.5,0) {};
  \draw[->] (0) -- (1);
}}
\newcommand{\twocompsym}{\tikz[inner sep=0pt,baseline=-\the\dimexpr\fontdimen22\textfont2\relax]{
  \node[fill, circle, minimum size=2pt] (0) at (0,0) {};
  \node[fill, circle, minimum size=2pt] (1) at (0.5,0) {};
  \draw[<->] (0) -- (1);
}}
\title{Bi-embeddability spectra and bases of spectra}

\author{Ekaterina Fokina}
\address{Institute of Discrete Mathematics and Geometry, Technische Universit\"at Wien, Wiedner Hauptstraße 8-10/104
Vienna, Austria}
\email{ekaterina.fokina@tuwien.ac.at}

\author{Dino Rossegger}
\address{Institute of Discrete Mathematics and Geometry, Technische Universit\"at Wien, Wiedner Hauptstraße 8-10/104
Vienna, Austria}
\email{dino.rossegger@tuwien.ac.at}

\author{Luca San Mauro}
\address{Institute of Discrete Mathematics and Geometry, Technische Universit\"at Wien, Wiedner Hauptstraße 8-10/104
Vienna, Austria}
\email{luca.san.mauro@tuwien.ac.at}

\thanks{The authors were supported by the Austrian Science Fund FWF through project P 27527.
}

\subjclass[2010]{
03C57,
03D45,
05C60
}

\keywords{degree spectra, bi-embeddability spectra, bases}

\begin{document}
\begin{abstract}
  We study degree spectra of structures with respect to the bi-em\-bed\-dab\-ility relation. The bi-embeddability spectrum of a structure is the family of Turing degrees of its bi-embeddable copies. To facilitate our study we introduce the notions of bi-embeddable triviality and basis of a spectrum. Using bi-embeddable triviality we show that several known families of degrees are bi-embeddability spectra of structures. We then characterize the bi-embeddability spectra of linear orderings and study bases of bi-embeddability spectra of strongly locally finite graphs.
\end{abstract}
\maketitle
The study of degrees realized by structures is a central topic in computable structure theory initiated by Richter~\cite{richter1981} who was the first to study the degrees of isomorphic copies of structures.
Knight~\cite{knight1986} studied degree spectra of structures, the set of degrees of isomorphic copies of a given structure. Since then the question which sets of degrees can be realized by degree spectra has been widely studied, see for instance~\cite{slaman1998,wehner1998,kalimullin2007,kalimullin2007a,andrews2015,andrews2016}.

In recent years researchers studied degree spectra under equivalence relations other than isomorphism. Fokina, Semukhin, and Turetsky~\cite{fokina2016} gave the following definition.
\begin{definition}
 Given a structure $\A$ and an equivalence relation $\sim$, the \emph{degree spectrum of $\A$ under $\sim$} is
 \[ DgSp_\sim(\A)=\{ deg(\B) : \B \sim\A\}.\]
\end{definition}
Definitions analogous to this were given by Yu Liang~\cite{yu2015} and Montalb\'an~\cite{montalban2015a}.
Under this notion the classical degree spectrum of a structure $\A$ is $DgSp_\cong(\A)$. In~\cite{fokina2016} Fokina, Semukhin, and Turetsky investigated degree spectra under $\Sigma_n$ equivalence, $DgSp_{\equiv_n}(\A)$. Two structures are $\Sigma_n$ equivalent, $\A \equiv_n \B$, if every first order $\Sigma_n$ sentence true of $\A$ is true of $\B$ and vice versa. Andrews and Miller~\cite{andrews2015} studied spectra of theories, the family of degrees of models of a complete theory $T$.
In terms of the above definition the theory spectrum of $T$ is the spectrum of a model $\A$ of$T$ under elementary equivalence, $DgSp_\equiv(\A)$. Recently, Rossegger~\cite{rossegger2018} investigated elementary bi-embeddability spectra of structures, $DgSp_{\approxeq}(\A)$. Two structures are elementary bi-embeddable if either is elementary embeddable in the other. Elementary bi-embeddability lies in between isomorphism and elementary equivalence in the sense that two isomorphic structures are elementary bi-embeddable and two elementary bi-embeddable structures are elementary equivalent but none of these implications reverses. It furthermore lies in between isomorphism and bi-embeddability, the topic of the present study.

Two structures $\A$ and $\B$ are \emph{bi-embeddable}, written $\A\biemb \B$, if either is embeddable in the other.
Bi-embeddability has been studied in both descriptive set theory and computable structure theory. Louveau and Rosendal~\cite{louveau2005complete} proved that the relation of bi-em\-bed\-da\-bi\-li\-ty for countable graphs is complete among analytic equivalence relations  under Borel reducibility; this contrasts to the case of the isomorphism relation which is far from complete on countable graphs. The effective theory behaves quite differently: Fokina, Friedman, Harizanov, McCoy, and Mont\'alban~\cite{fokina2012} proved that the isomorphism relation on several classes of computable structure (e.g., graphs, trees, and linear orderings) is complete among $\Sigma^1_1$ equivalence relations, while Friedman and Fokina~\cite{fokina2012sigma11} observed that the same does not hold  for the bi-embeddability relation on linear orderings (this follows from Montalban's analysis~\cite{montalban2005} of the bi-embeddable type of hyperarithmetic linear orderings, that we will discuss in Section \ref{sec:linorder}). Recently, Bazhenov, Fokina, Rossegger, and San Mauro investigated another computational aspect of the bi-embeddability relation. They studied the complexity of embeddings structures. To facilitate this study, they introduced computable bi-embeddable categoricity~\cite{bazhenov2018computable} and classified the degrees of computable bi-embeddable categoricity for equivalence structures~\cite{bazhenov2017degrees}.

The focus of this paper is the degree spectrum of $\A$ under bi-embeddability, or for short bi-embeddability spectrum of $\A$,
\[ DgSp_\biemb(\A)=\{ deg(\B): \B\biemb \A\}.\]
Obtaining examples of sets of degrees which are, or are not, bi-embeddability spectra of structures is in general difficult, since the bi-embeddability relation does not seem to possess strong combinatorial properties one could use to construct such examples. However, for many of the examples constructed for classical degree spectra, a thorough analysis of their construction shows that their isomorphism spectrum coincides with their bi-embeddability spectrum. Either because the structure is \emph{b.e.\ trivial}, i.e., its isomorphism type and bi-embeddability type coincide, or every bi-embeddable copy computes an isomorphic copy, in which case we say that the structure is a \emph{basis} for its bi-embeddability spectrum.

Given a single structure $\mc A$ we say that $\mc A$ is a $\sim$ basis of $\mc B$ if $\mc A\sim \mc B$ and $DgSp_{\cong}(\mc A)=DgSp_{\sim}(\mc B)$.
Apart from the above observation another motivation to study bases of spectra arises from the comparison of degree spectra under different equivalence relations. Given two equivalence relations $\sim_0,\sim_1$ on structures and a structure $\A$, a common question is if there is a structure $\mc B$ such that $DgSp_{\sim_1}(\mc B)=DgSp_{\sim_0}(\mc A)$.
In general this structure $\mc B$ might look very different than $\mc A$ from a structural point of view. Thus, given $\mc A$ it might be hard to find $\mc B$. Therefore it is useful to restrict $\mc B$ to some specific class of structures. The notion of a basis captures this question nicely for the most restrictive class of structures one could want $\mc B$ to be in, the $\sim_1$ type of $\mc A$. Note that, while our definition of a basis only captures the case where $\sim_0$ is isomorphism, it can be adapted to capture the general case without much effort.

In the present article we study the phenomenon of b.e.\ triviality and bi-em\-bed\-dabil\-ity bases of structures. Thus, if we say that $\mc A$ is a basis of $\mc B$ we mean that $\mc A$ is a bi-em\-bed\-dabil\-ity basis. In \cref{sec:betrivial} we give some examples of b.e.\ trivial structures and use these to obtain examples of well known families of degrees that are realized as bi-embeddability spectra. In \cref{sec:basis} we give a more general definition of a basis where we allow families of structures. This definition is motivated by the notion of basis in topology and linear algebra. In \cref{sec:linorder} we give a complete characterization of the bi-embeddability spectra of linear orderings and in \cref{sec:bespectraslfg} we show that in a subclass of strongly locally finite graphs every structure has a basis consisting of a single structure. We close by stating a number of open questions we consider interesting for future research.
\section{B.e.\ triviality}\label{sec:betrivial}
In this section we show for several families of degrees known to be isomorphism spectra that they are bi-embeddability spectra. For many families in the literature the isomorphism type of the structure realizing it coincides with its bi-embeddability type. We call such structures b.e.\ trivial.
\begin{definition}
 A structure $\A$ is \emph{b.e.\ trivial} if any bi-embeddable copy $\B$ of $\A$ is isomorphic to $\mathcal{A}$.
\end{definition}
A stronger condition that implies b.e.\ triviality is that any endomorphism of a structure is an automorphism. To see that this is strictly stronger consider the infinite complete graph. It is b.e.\ trivial but does have endomorphisms which are not automorphisms.

Apart from being b.e.\ trivial, many families of degrees can be realized by rigid structures, i.e., structures which do not possess non-trivial automorphisms. However, in general there is no connection between the number of automorphisms of a structure and b.e.\ triviality.
\begin{proposition}\label{prop:rigidnotbe}
 There is a b.e.\ trivial structure that is not rigid and there is a rigid structure that is not b.e.\ trivial.
\end{proposition}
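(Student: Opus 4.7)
For the b.e.\ trivial but non-rigid direction, I plan to invoke the infinite complete graph $K_\omega$, which the previous paragraph already identifies as b.e.\ trivial. A brief justification: any $\B \biemb K_\omega$ embeds into $K_\omega$ (forcing $\B$ to be complete) and contains $K_\omega$ as a subgraph (forcing $\B$ to be infinite), so $\B \cong K_\omega$. Since every transposition of two vertices is a nontrivial automorphism of $K_\omega$, this structure is very far from rigid.

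For the rigid but non-b.e.\ trivial direction, I would construct a countable digraph $\A$ as the disjoint union of one finite directed path of each length $n \geq 1$ (where the length-$n$ path has $n+1$ vertices and $n$ directed edges). Rigidity follows from two observations: distinct components have pairwise distinct sizes, so no automorphism interchanges them; and each finite directed path is itself rigid, since reversing it does not preserve edge direction. To refute b.e.\ triviality, I let $\B$ be $\A$ together with one additional directed path of length $1$. The embedding $\A \embeds \B$ is immediate. For the reverse embedding, I send the two length-$1$ components of $\B$ to the length-$1$ component and to the first edge of the length-$2$ component of $\A$, and each length-$n$ component of $\B$ (for $n \geq 2$) into the initial segment of the length-$(n+1)$ component of $\A$; the induced subgraph on each image contains exactly the required edges, so this is an embedding of $\B$ as a substructure of $\A$. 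Hence $\A \biemb \B$, while $\A \not\cong \B$ since they contain different numbers of length-$1$ components.

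The main obstacle I expect lies in the second claim. Rigidity can easily conspire with the ambient structure to force b.e.\ triviality: well-orderings, reversed well-orderings, and many other natural rigid structures turn out to be b.e.\ trivial, essentially because their rigid backbone combined with some form of comparability leaves no room for a non-isomorphic bi-embeddable copy. The design point of the disjoint-paths construction is to pair rigidity with enough slack --- each length-$n$ path has longer paths whose initial segments realize it --- so that one can accommodate an extra short component without destroying bi-embeddability.
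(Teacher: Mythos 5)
Your proof is correct. The first half coincides exactly with the paper's: both use the countable complete graph, and your justification that any bi-embeddable copy must be complete (it embeds into $K_\omega$ as an induced subgraph) and infinite (it contains a copy of $K_\omega$), hence isomorphic to $K_\omega$, is sound. For the second half you take a genuinely different route. The paper exhibits a rigid tree in the language of graphs whose branching degrees are strictly increasing along the lexicographic order; rigidity comes from automorphisms preserving the number of children, and failure of b.e.\ triviality comes from the tree being bi-embeddable with two disjoint copies of itself. You instead take the disjoint union of one directed path of each finite length and perturb it by a single extra length-$1$ component. Your verification is complete and correct: the components have pairwise distinct sizes and each directed path is rigid (the source vertex is the unique vertex of in-degree $0$, and everything else is then determined), so $\A$ is rigid; and the shift map sending each $P_n$ into the initial segment of $P_{n+1}$ lands on induced copies within distinct components, so the reverse embedding is legitimate, while the two structures differ in their number of two-element components. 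Your construction is somewhat more elementary and easier to check in full detail than the paper's tree (whose bi-embeddability with its doubled copy is asserted rather than verified), at the cost of using a directed rather than undirected signature; your closing remark correctly identifies the design principle common to both constructions, namely pairing rigidity with components that re-embed into strictly larger ones.
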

\begin{proof}
  For an example of a countable b.e.\ trivial structure that is not rigid consider the complete infinite graph. It is b.e.\ trivial but not rigid. In fact it has continuum many automorphisms as every permutation of its universe is an automorphism.

  On the other hand, for an example of a rigid structure that is not b.e.\ trivial consider a tree in the language of graphs, where the number of successors of a vertex is strictly monotonic in the canonic lexicographical ordering on the tree. This tree is rigid as any automorphism must map a vertex to a vertex with the same number of children. It is however not b.e.\ trivial as it is bi-embeddable with two disjoint copies of itself.
\end{proof}
The complete graph we used in the above proposition as an example of a b.e.\ trivial but not rigid structure is an example of an automorphically trivial structure. Recall that a structure is \emph{automorphically trivial} if there is a finite subset of its universe such that every permutation of its universe that fixes this subset pointwise is an automorphism.
\begin{proposition}
 Automorphically trivial structures are b.e.\ trivial.
\end{proposition}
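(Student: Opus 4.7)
The plan is to work with a \emph{minimal} witness $F=\{a_1,\ldots,a_n\}$ of the automorphic triviality of $\mathcal{A}$ (minimal in the sense that $|F|$ is as small as possible) and to fix embeddings $f\colon\mathcal{A}\embeds\mathcal{B}$ and $g\colon\mathcal{B}\embeds\mathcal{A}$. First I will identify a natural candidate witness $F_B\subseteq B$ coming from $g$; then, by running a symmetric argument and invoking minimality of $F$, I will force $F\subseteq g(B)$; finally, I will exhibit an explicit isomorphism $\phi\colon\mathcal{A}\to\mathcal{B}$.

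The first step is to set $F_B:=g^{-1}(F\cap g(B))$ and verify that $F_B$ witnesses the automorphic triviality of $\mathcal{B}$. By construction $g$ sends $F_B$ bijectively onto $F\cap g(B)$ and $B\setminus F_B$ into $A\setminus F$. Given any permutation $\sigma$ of $B$ fixing $F_B$ pointwise and any tuple $\bar b$ from $B$, the images $g(\bar b)$ and $g(\sigma(\bar b))$ in $\mathcal{A}$ then agree on which positions lie in $F$, on the specific $F$-elements at those positions (since $\sigma$ fixes $F_B$), and on the equality pattern on the remaining positions (since $g$ is injective and $\sigma$ permutes $B\setminus F_B$). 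Automorphic triviality of $\mathcal{A}$ says atomic types are determined by this ``$F$-pattern'', so $R^{\mathcal{A}}(g(\bar b))\Leftrightarrow R^{\mathcal{A}}(g(\sigma(\bar b)))$ for every relation $R$; using that $g$ is an embedding then yields $\sigma\in\mathrm{Aut}(\mathcal{B})$.

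Next, I will run the same construction symmetrically, with $f$ in place of $g$ and $F_B$ as the witness for $\mathcal{B}$, to produce a witness for $\mathcal{A}$ of size at most $|F_B|\leq|F|$; minimality of $|F|$ forces equality, and in particular $F\subseteq g(B)$. Invoking $|A|=|B|$ (Cantor--Bernstein applied to the underlying sets via $f$ and $g$), I will then define $\phi\colon A\to B$ by $\phi(a_i):=g^{-1}(a_i)$ for $a_i\in F$ and any bijection $A\setminus F\to B\setminus F_B$ on the complement. By construction $g\circ\phi$ restricts to the identity on $F$ and sends $A\setminus F$ into $A\setminus F$, so $\bar a$ and $g(\phi(\bar a))$ share the same $F$-pattern for every tuple $\bar a$ in $A$; automorphic triviality of $\mathcal{A}$ combined with $g$ being an embedding then gives $R^{\mathcal{A}}(\bar a)\Leftrightarrow R^{\mathcal{B}}(\phi(\bar a))$, so $\phi$ is an isomorphism.

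I expect the first step to be the main obstacle. Transferring the pattern-to-atomic-type correspondence from $\mathcal{A}$ through $g$ to $\mathcal{B}$ without assuming any analogous property of $\mathcal{B}$ works only because the specific choice $F_B=g^{-1}(F\cap g(B))$ makes $g$ cleanly separate witness elements from generic ones on both sides; for other natural candidates such as $f(F)$, the $F$-pattern of $g(\bar b)$ in $\mathcal{A}$ need not be invariant under the pullback of $\sigma$, and the argument collapses.
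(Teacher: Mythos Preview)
Your proof is correct and follows essentially the same route as the paper: transfer the witness through the embedding $g$ to show $\mathcal{B}$ is automorphically trivial, arrange that the witness $F$ for $\mathcal{A}$ lies inside $g(B)$, and then build the isomorphism as $g^{-1}$ on $F$ together with an arbitrary bijection on the complement, verifying it via the permutation characterization. The only difference is in the second step: the paper iterates the witness-transfer back and forth until the sizes stabilize, whereas you choose a minimal witness at the outset and use a counting argument, which is a mild streamlining of the same idea.
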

\begin{proof}
Let $\A$ be automorphically trivial and $\B\biemb \A$. Assume $\mu: \A \ra \B$ and $\nu: \B \ra \A$ are embeddings, and that $S_0$ is a finite substructure of $\A$ such that every permutation of $A$ fixing $S_0$ pointwise is an automorphism. We have that $\B$ is isomorphic to a substructure of $\A$ by $\nu$ and thus every permutation that fixes $\nu(\B)\cap S_0$  is an automorphism of $\nu(\B)$. Let $S_1$ be the pullback of $\nu(\B)\cap S_0$ along $\nu$. Then $S_1$ witnesses that $\B$ is automorphically trivial. We can inductively define $S_{n+1}$ switching the roles of $\A$, $\B$ and $\mu$, $\nu$ when $n$ is odd.
Observe that for all $n$, $S_{n+1}$ is isomorphic to a substructure of $S_n$. Because $S_0$ was finite we will find a fixpoint, i.e., there is an $n$ such that $S_{n+1}\cong S_n$. Let $k$ be the first even number such that $S_{k+1}\cong S_{k}$. Since we constructed $S_{k+1}$ by pulling back $S_k$ along $\nu$ we have that $\nu$ is an isomorphism between $S_{k+1}$ and $S_k$.

We can now build an isomorphism $f:\B\ra\A$. At stage $0$ let $f$ be $\nu\restrict[S_{k+1}]$, the above-mentioned isomorphism between $S_{k+1}$ and $S_k$. At stage $s$, if $f(s)$ is already defined or not in $B$ proceed to the next stage. Otherwise take the least $x\in A$ that is not in the range of $f$ and let $f(s)=x$. Then proceed to the next stage.

Clearly in the limit $f$ will be a bijection between $\B$ and $\A$. To see that it is an isomorphism let $T=dom(f)$ at some stage $s$. We have that $\nu(T)\cap f(T)\supseteq S_k$ and thus there is a permutation $\pi$ of $A$ fixing $S_k$ pointwise such that $\pi (f(T))=\nu(T)$. By automorphic triviality of $\A$ we have that \[f(T)\cong\pi(f(T))=\nu(T)\cong T.\]
Thus, at every stage $s$, $f$ is a partial isomorphism from $\A$ to $\B$ and therefore in the limit an isomorphism.
\end{proof}
Knight~\cite{knight1986} showed that if a structure is automorphically trivial, then its degree spectrum is a singleton, and that otherwise it is upwards closed. By the above proposition also the bi-embeddability spectrum  of automorphically trivial structures is a singleton. Clearly every bi-embeddability spectrum of a structure is the union of the degree spectra of structures in its bi-embeddability type. Thus, Knight's result also holds for bi-embeddability spectra.
\begin{corollary}
 If $\A$ is automorphically trivial, then its bi-embeddability spectrum is a singleton. Otherwise it is upwards closed.
\end{corollary}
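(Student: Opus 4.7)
The plan is to combine the preceding proposition with Knight's classical dichotomy for isomorphism spectra. The key observation, already recorded in the paragraph just before the corollary, is that
\[ DgSp_\biemb(\A) = \bigcup_{\B \biemb \A} DgSp_\cong(\B), \]
so the corollary reduces to a statement about the isomorphism spectra of the structures in the bi-embeddability type of $\A$.

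For the first claim, if $\A$ is automorphically trivial, then by the preceding proposition $\A$ is b.e.\ trivial, so every $\B \biemb \A$ satisfies $\B \cong \A$. The union above therefore collapses to $DgSp_\cong(\A)$, and Knight's theorem asserts that the isomorphism spectrum of an automorphically trivial structure is a singleton.

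For the second claim, I would first argue that if $\A$ is not automorphically trivial then no $\B \biemb \A$ is automorphically trivial either. Indeed, if such a $\B$ existed, the preceding proposition would yield that $\B$ is b.e.\ trivial, and the assumption $\A \biemb \B$ would then force $\A \cong \B$, making $\A$ automorphically trivial, a contradiction. Hence Knight's theorem applies to each $DgSp_\cong(\B)$ with $\B \biemb \A$ to give that it is upward closed, and an arbitrary union of upward closed sets of Turing degrees is upward closed. Since $\A \biemb \A$ the union is also nonempty, finishing the argument.

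There is no genuine obstacle here: the corollary is essentially an immediate synthesis of the preceding proposition with Knight's theorem once one writes down the union decomposition of the bi-embeddability spectrum. The only subtle point, which I would spell out carefully, is the contrapositive argument that ensures Knight's upward-closure dichotomy applies uniformly to every isomorphism spectrum contributing to the union.
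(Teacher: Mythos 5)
Your proof is correct and follows essentially the same route the paper takes: decompose $DgSp_\biemb(\A)$ as the union of the isomorphism spectra of its bi-embeddable copies, collapse that union via b.e.\ triviality in the automorphically trivial case, and apply Knight's dichotomy to each term of the union otherwise. The contrapositive step you add (no bi-embeddable copy of a non-automorphically-trivial structure can be automorphically trivial) is a detail the paper leaves implicit, and it is verified correctly.
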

We now look at examples of b.e.\ trivial structures that appear in the literature. The following definition appears in~\cite{csima2010}.
\begin{definition}
 Let $X\subseteq\omega$ and $n\in \omega$. The graph $G(\{n\}\oplus X)$ is an $\omega$ chain with an $n+5$ cycle attached to $0$, a $3$ cycle attached to $m$ if $m\in X$ and a $4$ cycle attached to $m$ if $m\not \in X$.
\end{definition}
\begin{proposition}\label{prop:codefamily}
  Let $X\subseteq \omega$, $\mf F$ be a family of sets and $\mc G$ be the disjoint union of the graphs $G(\{ n\} \oplus F)$ for $F\in \mf F$ and $n\in X$. Then $\mc G$ is b.e.\ trivial.
\end{proposition}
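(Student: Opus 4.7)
The plan is to show that every embedding of $\mc G$ into itself is in fact an automorphism; this is the ``stronger condition'' remarked on at the start of the section, and once it holds, bi-embeddability with any $\mc B$ via $\mu\colon\mc G\embeds\mc B$ and $\nu\colon\mc B\embeds\mc G$ makes $\nu\circ\mu$ an automorphism of $\mc G$, hence surjective, hence $\nu$ itself a surjective embedding, i.e.\ an isomorphism.

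I would first do the rigidity analysis for a single component $G(\{n\}\oplus F)$. The chain vertex $0$ is the unique degree-$3$ vertex, every other chain vertex has degree $4$, and every remaining vertex has degree $2$. Moreover a routine case analysis on closed walks shows that the only induced cycles are the $(n+5)$-marker attached at $0$ and the $3$- or $4$-cycle attached at each chain vertex $m$. Consequently, any embedding $\phi\colon G(\{n\}\oplus F)\embeds G(\{n'\}\oplus F')$ must send the marker into an induced cycle of length $n+5\geq 5$, which in the target can only be its own marker; this forces $n=n'$. Among the marker vertices, only chain vertex $0$ has degree $\geq 3$ in the ambient graph, so $\phi(0)=0$, and iterating the same degree argument along the chain (each $\phi(m)$ must be a neighbor of $\phi(m-1)$ of degree $\geq 4$ not already used) fixes the chain pointwise. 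Then the cycle attached at $m$ in the source must embed as an induced subgraph into the cycle attached at $m$ in the target; since a triangle cannot sit as an induced subgraph of an induced $4$-cycle and the ``opposite'' structure of a $4$-cycle cannot be realized inside a triangle, this forces $F=F'$. Finally, being injective on each finite cycle, $\phi$ restricts to an automorphism of each, and therefore $\phi$ is surjective.

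Next, any self-embedding of $\mc G$ must send each connected component into a single connected component; by the above it preserves the type $(n,F)$, and since each type in $X\times\mf F$ is realized by at most one component of $\mc G$, it stabilizes every component setwise and acts on each as an isomorphism. Hence every self-embedding of $\mc G$ is an automorphism.

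Putting this together: given $\mc B\biemb\mc G$ with witnessing embeddings $\mu$ and $\nu$, the composition $\nu\circ\mu$ is a self-embedding of $\mc G$ and therefore an automorphism, which gives $\nu$ surjective and consequently an isomorphism $\mc B\cong\mc G$. The main technical obstacle is the structural analysis of the single component $G(\{n\}\oplus F)$, specifically verifying that no unintended induced cycles arise by concatenating portions of the $\omega$-chain with portions of the attached cycles and that degrees pin down the chain rigidly; everything else is a clean bookkeeping argument on components.
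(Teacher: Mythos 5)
Your proof is correct and follows essentially the same route as the paper's: both reduce the claim to the rigidity of a single component $G(\{n\}\oplus F)$ under embeddings (the paper simply notes that cycles of length $m$ only embed into cycles of length $m$, where you spell out the degree and induced-cycle analysis), and both then use the fact that each type $(n,F)$ occurs in exactly one component to conclude that the composite self-map of $\mc G$ is an automorphism and hence that the witnessing embeddings are isomorphisms. The only nitpick is the degree count for chain vertex $0$ (by the definition it also carries an attached $3$- or $4$-cycle, so its degree is $5$ rather than $3$), which does not affect your argument since all you use is that it is the unique vertex of the marker cycle with degree greater than $2$.
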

\begin{proof}
  It is easy to see that for any set $Y$ and $n\in \omega$, $G(\{n\}\oplus Y)$ is b.e.\ trivial as cycles of length $m$ only embed into cycles of length $m$.

  Now, say $\mc G$ is bi-embeddable with $\A$, say $f: \mc G \ra \mc A$ and $g:\mc A \ra \mc G$. Let $G(\{n\} \oplus F)$ be a component of $\mc G$, then $g(f(G(\{n\} \oplus F)))$ must be in a component containing a substructure isomorphic to $G(\{n\}\oplus F)$. By construction the only component like this is $G(\{n\} \oplus F)$ and as it is b.e.\ trivial we get that $g$ is the inverse of $f$ on $G(\{n\} \oplus F)$.
  We have that for every $n\in X$ and $F\in \mf F$, $\mc G$ contains exactly one component isomorphic to $G(\{n\} \oplus F)$ and no other components. Therefore, $g$ is the inverse of $f$, and thus, $f$ is an isomorphism.
\end{proof}
Graphs of the form required in \cref{prop:codefamily} were used in~\cite{csima2010} to show that the class of non computable degrees and the class of hyperimmune degrees are isomorphism spectra. We now get the same result for bi-embeddability spectra.
\begin{theorem}\label{th:exspectra}
  \begin{enumerate}
    \item For every Turing degree $\mbf{a}$ there is a graph $\mc G$ such that $DgSp_{\biemb}(\mc G)=\{ \mbf{d}: \mbf{d} \geq \mbf{a}\}$.
    \item There is a graph $\mc G$ such that $DgSp_\biemb(\mc G)=\{ \mbf d: \mbf d> \mbf 0\}$.
    \item There is a graph $\mc G$ such that $DgSp_\biemb(\mc G)=\{ \mbf d :\mbf d \text{ is hyperimmune}\}$.
  \end{enumerate}
\end{theorem}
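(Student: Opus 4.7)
My strategy is to reduce each of the three parts to a known isomorphism spectrum result by invoking b.e.\ triviality. In each case I would exhibit a graph $\mc G$ of the form appearing in \cref{prop:codefamily}, so that $\mc G$ is b.e.\ trivial; consequently every bi-embeddable copy of $\mc G$ is isomorphic to $\mc G$, and hence $DgSp_\biemb(\mc G)=DgSp_\cong(\mc G)$. The identification of the latter spectrum with the target set of degrees will then come from~\cite{csima2010}, where graphs of exactly this form were used to realize each of the three types of degree spectra above.

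For (1), given $A\in\mbf a$ I would take $X=\{0\}$ and $\mf F=\{A\}$, so that $\mc G=G(\{0\}\oplus A)$. A Richter-style argument shows that any copy of $\mc G$ computes $A$: the $5$-cycle pinpoints the distinguished endpoint of the $\omega$-chain, the chain is then traversable, and the cycle attached to the $m$-th vertex is of length $3$ or $4$ depending on whether $m\in A$. Conversely $A$ computes a copy of $\mc G$. Thus $DgSp_\cong(\mc G)=\{\mbf d:\mbf d\geq \mbf a\}$, and \cref{prop:codefamily} upgrades this to the bi-embeddability spectrum.

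For (2) and (3) I would plug into the construction of \cref{prop:codefamily} the families of finite sets $\mf F$ supplied by~\cite{csima2010}: one family for which the degrees of enumerations of $\mf F$ are exactly the non-zero degrees, and one for which they are exactly the hyperimmune degrees. Setting $X=\{0\}$ and $\mc G$ to be the disjoint union of the graphs $G(\{0\}\oplus F)$ for $F\in \mf F$, \cref{prop:codefamily} delivers b.e.\ triviality while~\cite{csima2010} identifies $DgSp_\cong(\mc G)$ with the required set of degrees; in both directions this rests on the observation that a copy of $\mc G$ and an enumeration of $\mf F$ are Turing equivalent, because the coding in $G(\{n\}\oplus F)$ is uniformly decodable component by component.

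No genuine obstacle arises: b.e.\ triviality is handed to us by \cref{prop:codefamily}, and the hard work of constructing families with the right enumeration degrees in (2) and (3) is already done in~\cite{csima2010}. The only routine checks left are the degree equivalences between copies of $\mc G$ and enumerations of $\mf F$, together with the coding argument for (1); each is standard.
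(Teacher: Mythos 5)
Your proposal follows essentially the same route as the paper: use \cref{prop:codefamily} to get b.e.\ triviality, so that $DgSp_\biemb(\mc G)=DgSp_\cong(\mc G)$, and then quote~\cite{csima2010} for the identification of the isomorphism spectrum; part (1) with $G(\{0\}\oplus A)$ and the Richter-style decoding is exactly the paper's argument. The one inaccuracy is your closing justification for (2) and (3), namely that ``a copy of $\mc G$ and an enumeration of $\mf F$ are Turing equivalent.'' This holds in only one direction: a copy of $\mc G$ does compute each member of the family uniformly (by locating the $5$-cycle, walking the chain, and searching for the attached cycle at each vertex), but a mere enumeration of $\mf F$ does \emph{not} compute a copy of $\mc G$, since building $G(\{n\}\oplus F)$ forces you to decide, for each $m$, whether to attach a $3$- or a $4$-cycle, i.e., to decide membership in $F$, which positive information alone does not give. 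This is precisely why the Csima--Kalimullin argument is not just Wehner's: they characterize the degrees of copies of $\mc G$ via the existence of $Y\equiv_T X$ with $Y^{[e]}$ finite and $Y^{[e]}\neq W_e$ for all $e$ (a uniformly \emph{computable} listing of the family), and then separately prove that exactly the noncomputable (resp.\ hyperimmune) degrees admit such a $Y$. Since you defer the spectrum computation to~\cite{csima2010} anyway, the proof stands, but the ``routine check'' you describe is not the check that reference actually performs.
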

\begin{proof}
  For (1), given a set $X\in \mbf a$ consider the graph using $\{0\}\oplus X$. It is not hard to see that $G(\{0\} \oplus X)$ is b.e.\ trivial and $DgSp_\biemb(G(\{0\} \oplus X))=\{ \mbf d: \mbf d\geq \mbf a\}$.
  Items (2) and (3) follow directly from \cref{prop:codefamily} and the results in~\cite{csima2010}. The proofs given there follow the ideas of Wehner's proof that the noncomputable degrees are the spectrum of a structure~\cite{wehner1998} but with some differences.

  We sketch the proof of (2). Wehner considered the family of finite sets $\mf F=\{ \{n\}\oplus F : \text{ F finite} \land F\neq W_n\}$. He showed that this family is $X$-computably enumerable if and only if $X$ is not computable and coded this family into a structure $\mc H$ such that $\mc H$ is $X$-computable if and only if the family is $X$-c.e. It is unclear how to produce a b.e.\ trivial structure such that $\mf F$ is c.e.\ in every of its bi-embeddable copies. Indeed, one can show that the usual encoding using ``bouquet graphs'' (see~\cite{rossegger2018}) has a computable bi-embeddable copy. However, if we consider the graph $\mc G$ obtained by taking the disjoint unions of the graphs $G(\{n\} \oplus F)$ for $\{n\}\oplus F\in\mf F$ we obtain a b.e.\ trivial structure by \cref{prop:codefamily}. Csima and Kalimullin showed that $\mc G$ is $X$-computable if and only if there is $Y\equiv_T X$ such that for all $e\in \omega$, $Y^{[e]}$ is finite and $Y^{[e]}\neq W_e$.\footnote{Here $Y^{[e]}$ denotes the $e^{\text{th}}$ column of $Y$, i.e., $Y^{[e]}=\{ y: \langle e,y\rangle \in Y\}$.} They then showed that the degrees with this property are exactly the non-computable degrees.
\end{proof}
There are also other spectra known to be bi-embeddability spectra. In~\cite{rossegger2018} Rossegger observed that for all computable successor ordinals $\alpha$ and $\beta$, $\{ \mbf d: \mbf d^{(\alpha)}\geq \mbf 0^{(\beta)}\}$ is the bi-embeddability spectrum of a structure; he constructed b.e.\ trivial structures having such spectra.
It is doubtful whether this result can be extended to include limit ordinals. Soskov~\cite{soskov2013a} gave an example of an isomorphism spectrum of a structure $\A$ such that $DgSp_{\cong}(\A)\subseteq\{ \mbf d: \mbf d \geq \mbf{0}^{(\omega)}\}$ and showed that no structure has $\{ \mbf d: \mbf{d}^{(\omega)}\in DgSp_{\cong}(\A)\}$ as its isomorphism spectrum.
Faizrahmanov, Kach, Kalimullin, and Montalb\'an~\cite{faizrahmanov2018} recently showed that no structure realizes the family $\{ \mbf d: \mbf{d}^{(\omega)} \geq \mbf{a}^{(\omega)}\}$ for $\mbf{a}\geq \mbf{0}^{(\omega)}$ as its isomorphism spectrum.

Andrews and Miller~\cite{andrews2015} showed that the family $\{\mbf d:\mbf d^{(\omega+1)}\geq  \mbf 0^{(\omega\cdot 2+2)}\}$ is not the theory spectrum of a structure. Rossegger's result therefore gives an example of a bi-embeddability spectrum which can not be a theory spectrum.
\section{Basis}\label{sec:basis}
All examples of bi-embeddability spectra seen so far have been realized by examples which are b.e.\ trivial, i.e., their bi-embeddability type and their isomorphism type coincides. B.e.\ triviality is purely model theoretic. Since we are interested in degree spectra, a more general property of a structure $\A$ is when we can find a structure $\B$ in $\A$'s bi-embeddability type such that
\[ DgSp_{\biemb}(\A)=DgSp_{\cong}(\B).\]
In this case we say that $\B$ is a b.e.\ basis for $\A$. We now give a general definition of a basis.
\begin{definition}\label{def:basis}
  Given a structure $\A$ and an equivalence relation $\sim$ we say that a family $\mf B$ of structures is a \emph{$\sim$ basis} for $\A$ if
  \begin{enumerate}
    \item $\forall \B \in \mf B\ \B \sim \A$,
    \item $\forall \B,\C \in \mf B\ DgSp_{\cong}(\B)\not\subseteq DgSp_{\cong}(\C)$,
    \item and $DgSp_\sim(\A)=\bigcup_{\B\in \mf B} DgSp_\cong(\B)$.
  \end{enumerate}
\end{definition}
Recall the notion of Muchnik reducibility; a set of reals $P$ is \emph{Muchnik reducible} to a set of reals $Q$, $P\leq_w Q$, if every real in $Q$ computes a real in $P$. In terms of structures one usually says that $\mc A \leq_w \mc B$ if every structure in the isomorphism type of $\mc B$ computes a structure in the isomorphism type of $\mc A$, which is equivalent to saying that $DgSp_\cong(\B)\subseteq DgSp_\cong(\A)$. Let $\mf{A}$ and $\mf B$ be families of structures. Muchnik reducibility extends naturally to such families.
\[ \mf A \leq_w \mf B :\LR \bigcup_{\mc B \in \mf B} DgSp_\cong(\mc B)\subseteq \bigcup_{\mc A\in \mf A} DgSp_\cong(\mc A)\]
Using this we get the following characterization of a $\sim$ basis.
\begin{proposition}\label{prop:muchnik}
  Let $\mf A$ be the family of structures bi-embeddable with $\A$. The family $\mf B\subseteq \mf A$ is a $\sim$ basis of $\mc A$ if and only if $\mf B$ is a minimum with respect to inclusion such that $\mf B \leq_w \mf A$.
\end{proposition}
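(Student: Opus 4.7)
The plan is to prove each direction of the biconditional by unpacking the definitions. For the ``only if'' direction, assume $\mf B$ is a $\sim$ basis of $\mc A$. Item (1) of \cref{def:basis} gives $\mf B\subseteq \mf A$ immediately, and since by definition $DgSp_\sim(\mc A)=\bigcup_{\mc C\in\mf A}DgSp_\cong(\mc C)$, item (3) rewrites as $\bigcup_{\mc C\in\mf A} DgSp_\cong(\mc C)=\bigcup_{\B\in \mf B}DgSp_\cong(\B)$, which in particular yields $\mf B\leq_w\mf A$. For minimality, suppose towards a contradiction that $\mf B'\subsetneq \mf B$ also satisfies $\mf B'\leq_w\mf A$, and pick $\B^*\in\mf B\setminus \mf B'$; then $DgSp_\cong(\B^*)\subseteq\bigcup_{\B'\in\mf B'}DgSp_\cong(\B')$, and since every such $\B'$ lies in $\mf B$ and differs from $\B^*$, item (2) furnishes witness degrees $\mbf d_{\B'}\in DgSp_\cong(\B^*)\setminus DgSp_\cong(\B')$ to be combined into a single degree in $DgSp_\cong(\B^*)$ escaping the covering union.

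For the ``if'' direction, assume $\mf B\subseteq\mf A$ is minimal with respect to inclusion among subfamilies satisfying $\mf B\leq_w\mf A$. Item (1) is immediate. Item (3) follows by combining $\mf B\subseteq \mf A$, which gives $\bigcup_{\B\in\mf B}DgSp_\cong(\B)\subseteq DgSp_\sim(\mc A)$, with $\mf B\leq_w\mf A$, which gives the reverse inclusion. For item (2), if distinct $\B,\C\in\mf B$ had $DgSp_\cong(\B)\subseteq DgSp_\cong(\C)$, then $\mf B\setminus\{\B\}$ would still cover the same union of isomorphism spectra as $\mf B$, hence satisfy $\mf B\setminus\{\B\}\leq_w\mf A$, contradicting the minimality of $\mf B$.

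The hard part will be the minimality step in the ``only if'' direction: item (2) only rules out a single spectrum being contained in another single spectrum of $\mf B$, whereas minimality forbids containment in the union of all the others. Closing this gap requires using the witnesses $\mbf d_{\B'}$ more globally, for instance by invoking the upward closure of degree spectra from Knight's theorem to diagonalize out of $\bigcup_{\B'\in\mf B'}DgSp_\cong(\B')$ and exhibit a degree realized by a copy of $\B^*$ but by no copy of any $\B'\in\mf B'$.
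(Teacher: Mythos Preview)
The paper states this proposition without proof, treating it as an immediate reformulation of \cref{def:basis}. Your argument for the ``if'' direction is complete and correct.

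You are right to flag the gap in the ``only if'' direction, and your proposed patch does not close it. Item~(2) of \cref{def:basis} only asserts \emph{pairwise} non-containment $DgSp_\cong(\B)\not\subseteq DgSp_\cong(\C)$; it does not rule out that $DgSp_\cong(\B^*)$ is contained in the union $\bigcup_{\B'\in\mf B'}DgSp_\cong(\B')$ over a proper subfamily $\mf B'\subsetneq\mf B$. Knight's upward-closure theorem does not help here: upward-closed families of degrees can certainly be covered by unions of other upward-closed families without being contained in any single one, and when $\mf B'$ is infinite there is no mechanism for amalgamating the individual witnesses $\mbf d_{\B'}$ into a single degree in $DgSp_\cong(\B^*)$ that avoids every $DgSp_\cong(\B')$ simultaneously. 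So the ``only if'' direction, read literally as minimality under inclusion, does not follow from \cref{def:basis} alone.

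In the paper this does no damage: the only place \cref{prop:muchnik} is invoked is in the proof of \cref{th:openendedbebasis}, where the relevant family is the \emph{singleton} $\{S^{\mc G}\}$, and for a singleton $\mf B$ the equivalence with \cref{def:basis} is immediate (item~(2) is vacuous and minimality is automatic). You should therefore read the proposition as an informal restatement motivating the terminology, rather than as a sharp biconditional, or else interpret ``minimum with respect to inclusion such that $\mf B\leq_w\mf A$'' as simply a compressed rendering of conditions (1)--(3).
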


All b.e.\ trivial structures exhibited in \cref{sec:betrivial} clearly have a singleton bi-embeddability basis, themselves.
It is unclear whether there exist structures with countable or even finite bi-embeddability basis greater than one. This question can be viewed as the computability theoretic analogue to a conjecture by Thomass\'e in model theory stating that the number of isomorphism types in the bi-embeddability type of a relational countable structure is either $1$, $\aleph_0$, or $2^{\aleph_0}$, see~\cite{laflamme2018} for more on this conjecture.

However, if we consider bases for other equivalence relations, the analogue of this question has been answered positively. Andrews and Miller~\cite{andrews2015} have shown that there is a theory whose degree spectrum is the union of two cones, i.e., there is a complete theory $T$ such that for $\A\models T$, $DgSp_\equiv(\A)=\{\mbf d \geq \mbf a\}\cup \{\mbf d\geq \mbf b\}$ for two incomparable Turing degrees $\mbf a$ and $\mbf b$. Fokina, Semukhin, and Turetsky~\cite{fokina2016} showed that the same holds for $\Sigma_n$ equivalence with $n>2$. Hence, for $\Sigma_n$ equivalence and elementary equivalence there are structures with a basis of size $2$.
\section{Linear orderings}\label{sec:linorder}
Montalb\'an~\cite{montalban2005} showed that all hyperarithmetic linear orderings are bi-emb\-ed\-dab\-le with a computable one, and thus their bi-embeddability spectrum contains all Turing degrees.
The following is a relativization of his theorem~\cite[Theorem 1.2]{montalban2005}.
\begin{theorem}\label{thm:hypXrec}
  Let $X\subseteq \omega$. If a linear ordering is hyperarithmetic in $X$ then it is bi-embeddable with an $X$-computable linear ordering.
\end{theorem}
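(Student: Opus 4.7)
The plan is to verify that Montalb\'an's proof of the unrelativized statement from~\cite{montalban2005} relativizes uniformly to an arbitrary oracle $X$. First, I would recall the key idea of that argument: Montalb\'an assigns to each hyperarithmetic linear ordering $\mc L$ a bi-embeddability invariant obtained from a Hausdorff-style decomposition into dense and scattered pieces, whose rank $\alpha$ is a computable ordinal (i.e., $\alpha < \omega_1^{CK}$), and from such an invariant, presented via an ordinal notation together with some finite combinatorial data per level, he gives an explicit construction of a computable linear ordering lying in the bi-embeddability type of $\mc L$.

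Next, I would re-run each step of that construction with $\emptyset$ replaced by $X$ throughout. If $\mc L$ is hyperarithmetic in $X$, then the associated bi-embeddability invariant is $X$-hyperarithmetic, its rank lies strictly below $\omega_1^X$, and it admits an $X$-computable notation. Feeding this $X$-computable data into the assembly procedure produces an $X$-computable linear ordering $\mc L'$ instead of a computable one. No step uses a specific feature of $\emptyset$ beyond properties that hold relative to an arbitrary oracle, and the purely structural lemmas establishing $\mc L \biemb \mc L'$ (such as the invariance of bi-embeddability type under the chosen decomposition) are oracle-free.

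The only point that requires care, and which I expect to be the main (minor) obstacle, is checking that Montalb\'an's ordinal bookkeeping and stage-by-stage construction are genuinely uniform in an ordinal notation, so that plugging in an $X$-computable notation (in place of a computable one) really does yield an $X$-computable ordering rather than something of higher complexity. Since his construction is already presented in this uniform form, this verification amounts to a routine inspection, and the relativized statement follows.
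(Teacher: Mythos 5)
Your proposal matches the paper's treatment: the authors likewise obtain \cref{thm:hypXrec} simply by observing that Montalb\'an's construction in~\cite{montalban2005} is uniform in the computability-theoretic data (the ordinal notation and the presentation of the bi-embeddability invariant), so replacing $\emptyset$ by $X$ throughout yields an $X$-computable bi-embeddable copy. The paper gives even less detail than you do, noting only that most of the original argument is not computability-theoretic and that the computability-theoretic part relativizes.
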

This theorem implies that every linear ordering has a singleton bi-embeddability basis.

The proof of the original theorem is involved and most of it is not computability theoretic. Its relativization, \cref{thm:hypXrec}, can be obtained by relativizing the computability theoretic part.

As a corollary we obtain a characterization of the bi-embeddability spectra of linear orderings in terms of their Hausdorff rank. Before we state the corollary we introduce the required notions.

\begin{definition}
  Let $\mc L=(L,\leq)$ be a linear ordering. For $x,y\in L$ let $x\sim_0 y$ if $x=y$, for $\alpha$ a countable limit ordinal $x\sim_\alpha y$ if $x\sim_\gamma y$ for some $\gamma<\alpha$ and for $\alpha=\beta+1$ $x\sim_\alpha y$ if the intervals $[[x]_{\sim_\beta},[y]_{\sim_\beta}]$ or $[[x]_{\sim_\beta},[y]_{\sim_\beta}]$ are finite.

  The \emph{Hausdorff rank} of $\mc L$, $r(\mc L)$, is the least countable ordinal $\alpha$ such that $\mc L/{\sim_{\alpha}}$ is finite.
\end{definition}
Hausdorff~\cite{hausdorff1908} showed that a linear ordering is \emph{scattered}, i.e., it does not embed a copy of $\eta$, if and only if it has countable Hausdorff rank. Clearly, if $\mc L$ is not scattered then it is bi-embeddable with $\eta$, and thus has a computable bi-embeddable copy.
In ~\cite{montalban2005} it was shown that a scattered linear ordering is bi-embeddable with a computable one if and only if it has computable Hausdorff rank. Given a set $X\subseteq \omega$ we write $\omega_1^{X}$ for the first non $X$-computable ordinal. We can now state a relativization of this theorem.
\begin{theorem}\label{thm:hrXcom}
  Let $X\subseteq\omega$. A scattered linear ordering $\mc L$ has an $X$-computable bi-embeddable copy if and only if $r(\mc L)<\omega_1^{X}$.
\end{theorem}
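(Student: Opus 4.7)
The plan is to obtain \cref{thm:hrXcom} by relativizing Montalb\'an's argument from~\cite{montalban2005}, invoking \cref{thm:hypXrec} as a black box for the nontrivial direction.

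For the forward direction, suppose $\mc L$ is bi-embeddable with an $X$-computable linear ordering $\mc M$. Since being scattered is preserved under bi-embeddability ($\eta\embeds\mc L$ iff $\eta\embeds\mc M$), $\mc M$ is also scattered. To see that $r(\mc L)=r(\mc M)$, I would show by transfinite induction on $\alpha$ that any embedding $f:\mc K\embeds\mc K'$ \emph{reflects} the relations $\sim_\alpha$: if $x\not\sim_\alpha^{\mc K} y$, then $f(x)\not\sim_\alpha^{\mc K'} f(y)$. The successor step $\alpha=\beta+1$ is the only nontrivial case: pick representatives $z_1,z_2,\dots$ from the infinitely many distinct $\sim_\beta$-classes lying strictly between $[x]_{\sim_\beta}$ and $[y]_{\sim_\beta}$; the inductive hypothesis then yields that the classes $[f(z_i)]_{\sim_\beta}$ are pairwise distinct and strictly between $[f(x)]_{\sim_\beta}$ and $[f(y)]_{\sim_\beta}$. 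Reflection implies $|\mc K/{\sim_\beta}|\leq|\mc K'/{\sim_\beta}|$ for every $\beta$, so embeddings cannot decrease rank, and bi-embeddability then forces $r(\mc L)=r(\mc M)$. Finally, $r(\mc M)<\omega_1^X$ holds for any $X$-computable scattered ordering because the family $(\sim_\alpha)$ on $\mc M$ is defined by a transfinite recursion of $X$-hyperarithmetic complexity whose stabilization ordinal is necessarily $X$-computable.

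For the backward direction, suppose $r(\mc L)=\alpha<\omega_1^X$. By \cref{thm:hypXrec}, it suffices to exhibit a bi-embeddable copy of $\mc L$ that is hyperarithmetic in $X$. I would construct such a copy by induction on the Hausdorff rank: Hausdorff's theorem decomposes every scattered ordering of rank $\beta+1$ as a sum of rank-$\leq\beta$ orderings indexed by a finite, $\omega$-like, $\omega^*$-like, or $\mathbb{Z}$-like skeleton, and recording the bi-embeddability invariants of these pieces at each level of the iterated decomposition of $\mc L$ yields a canonical representative of its bi-embeddability type. Since $\alpha<\omega_1^X$, this recursion has $X$-computable length and produces an $X$-hyperarithmetic structure bi-embeddable with $\mc L$, to which \cref{thm:hypXrec} can be applied.

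The main obstacle I anticipate lies in the backward direction, specifically in identifying the right bi-embeddability invariants at each level of the Hausdorff decomposition and verifying that the resulting canonical structure is indeed bi-embeddable with $\mc L$. This is the structural heart of Montalb\'an's argument, and once it is in hand, \cref{thm:hypXrec} packages exactly the computability-theoretic content needed to pass from hyperarithmetic-in-$X$ to $X$-computable.
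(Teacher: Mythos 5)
The paper offers no proof of this theorem at all—it is presented as a direct relativization of Montalb\'an's result in~\cite{montalban2005}—and your proposal takes exactly that route: the forward direction (embeddings reflect the relations $\sim_\alpha$, so Hausdorff rank is a bi-embeddability invariant, plus the standard fact that an $X$-computable scattered ordering has rank below $\omega_1^X$) is correct, and the backward direction correctly defers to the structural core of Montalb\'an's decomposition argument combined with \cref{thm:hypXrec}. This matches the paper's (implicit) proof, so there is nothing to add.
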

In other words, $\mc L$ has an $X$-computable copy if and only if it computes a copy of its Hausdorff rank, i.e., $X\geq_T \mc A\cong r(\mc L)$. This combined with \cref{thm:hypXrec} yields the following characterization of bi-embeddability spectra of linear orderings.
\begin{corollary}\label{cor:basislo}
  Let $\mc L$ be a linear ordering.
  \begin{enumerate}
    \item If $\eta \embeds \mc L$, then $\eta$ is a b.e.\ basis for $\mc L$, i.e., $DgSp_\biemb(\mc L)=DgSp_{\cong}(\eta)=\{ \mbf d: \mbf d\geq \mbf 0\}$,
    \item if $\mc L$ is scattered, then its Hausdorff rank is a b.e.\ basis for $\mc L$, i.e., $DgSp_\biemb(\mc L)=DgSp_{\cong}(r(\mc L))$.
  \end{enumerate}
\end{corollary}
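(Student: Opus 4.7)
The plan is to treat the two cases separately, with each reducing to a previously stated theorem plus an upward-closure argument.

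For (1), I would invoke the universality of $\eta$ among countable linear orderings: since $\mc L$ embeds into $\eta$ and, by assumption, $\eta$ embeds into $\mc L$, we have $\mc L \biemb \eta$, so $DgSp_\biemb(\mc L) = DgSp_\biemb(\eta)$. Since $\eta$ has a computable copy, and $\eta$ is not automorphically trivial (its universe is infinite), Knight's theorem gives $DgSp_\cong(\eta) = \{\mbf d : \mbf d \geq \mbf 0\}$. The inclusion $DgSp_\cong(\eta) \subseteq DgSp_\biemb(\eta)$ is immediate, and the reverse inclusion is trivial. This settles (1).

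For (2), fix a scattered $\mc L$ and show the equality $DgSp_\biemb(\mc L) = DgSp_\cong(r(\mc L))$ by double inclusion, using \cref{thm:hrXcom} as the engine. For $(\subseteq)$, if $\mbf d \in DgSp_\biemb(\mc L)$, then by definition there is $\mc L' \biemb \mc L$ with $\deg(\mc L') = \mbf d$; in particular $\mc L$ has a $\mbf d$-computable bi-embeddable copy, so \cref{thm:hrXcom} gives $r(\mc L) < \omega_1^{\mbf d}$, and hence $\mbf d$ computes a copy of the ordinal $r(\mc L)$, i.e., $\mbf d \in DgSp_\cong(r(\mc L))$. For $(\supseteq)$, if $\mbf d \in DgSp_\cong(r(\mc L))$, then $r(\mc L) < \omega_1^{\mbf d}$, and \cref{thm:hrXcom} supplies a $\mbf d$-computable bi-embeddable copy $\mc L'$ of $\mc L$.

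The only subtlety is that \cref{thm:hrXcom} produces a copy $\mc L'$ computable in $\mbf d$, whereas membership in $DgSp_\biemb(\mc L)$ requires a copy of exact degree $\mbf d$. I would resolve this by a standard appeal to Knight's upward closure: $\mc L'$ has infinite universe and is not automorphically trivial, so its isomorphism spectrum is upward closed above $\deg(\mc L')$ and therefore contains $\mbf d$; any such copy is still bi-embeddable with $\mc L$. This is the one non-bookkeeping step, and it is routine once Knight's theorem is cited. Combining the two inclusions completes (2).
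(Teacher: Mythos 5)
Your proof is correct and takes essentially the same route as the paper, which derives the corollary directly from \cref{thm:hypXrec} and \cref{thm:hrXcom}; your double inclusion plus the Knight upward-closure patch is exactly the routine verification the paper leaves implicit. The only nit: the same upward-closure remark is also needed in the $(\subseteq)$ direction of (2), to pass from ``$\mbf d$ computes a copy of $r(\mc L)$'' to ``$\mbf d\in DgSp_{\cong}(r(\mc L))$'', and the identical argument applies since an infinite ordinal is not automorphically trivial.
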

Montalb\'ans result shows that the bi-embeddability spectra of linear orderings of hyperarithmetic Hausdorff rank always have a minimum -- the computable degree. However, this is not the case for all linear orderings.
\begin{proposition}
  Let $\mc L$ be a linear ordering with $\omega_1^{\mrm{CK}} \leq r(L)\leq \omega_1^{\omega_1^{\mrm{CK}}}$. Then $DgSp_\biemb(\mc L)=DgSp_{\cong}(\omega_1^\mrm{CK})$ does not contain a least element.
\end{proposition}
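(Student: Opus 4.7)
The plan is to first establish $DgSp_\biemb(\mc L)=DgSp_\cong(\omega_1^\mrm{CK})$ and then to show that this common set has no least Turing degree. By \cref{cor:basislo}(2) the bi-embeddability spectrum of the scattered ordering $\mc L$ equals the isomorphism spectrum of $r(\mc L)$, and for any ordinal $\alpha$ one has $DgSp_\cong(\alpha)=\{\mathbf d:\alpha<\omega_1^{\mathbf d}\}$, so it suffices to show
\[\{\mathbf d:r(\mc L)<\omega_1^{\mathbf d}\}=\{\mathbf d:\omega_1^\mrm{CK}<\omega_1^{\mathbf d}\}.\]
The inclusion $\subseteq$ is immediate from $\omega_1^\mrm{CK}\leq r(\mc L)$. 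For the reverse inclusion I would invoke the classical fact that $\omega_1^{\mathbf d}$ is always admissible: whenever $\omega_1^{\mathbf d}>\omega_1^\mrm{CK}$, it must be at least the next admissible ordinal above $\omega_1^\mrm{CK}$, namely $\omega_1^{\omega_1^\mrm{CK}}$, which together with the assumed upper bound $r(\mc L)\leq\omega_1^{\omega_1^\mrm{CK}}$ yields $r(\mc L)<\omega_1^{\mathbf d}$.

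For the second part I argue by contradiction. Suppose $\mathbf d_0$ is the least degree in $DgSp_\cong(\omega_1^\mrm{CK})$. Then $\omega_1^{\mathbf d_0}>\omega_1^\mrm{CK}$, which makes $\mathbf d_0$ non-hyperarithmetic, while at the same time $\mathbf d_0\leq_T\mathcal O$ because Kleene's $\mathcal O$ itself computes a copy of $\omega_1^\mrm{CK}$. The contradiction would come from exhibiting two presentations $X_0,X_1$ of $\omega_1^\mrm{CK}$ whose only common Turing lower bounds are hyperarithmetic; such a minimal pair forces $\mathbf d_0\leq_T X_0\wedge X_1$ to be hyperarithmetic, contrary to the above.

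The production of this minimal pair is the main obstacle. Equivalently, what is needed is the Spector-style statement that for every non-hyperarithmetic set $A$ there exists a real $X$ with $\omega_1^X>\omega_1^\mrm{CK}$ and $A\not\leq_T X$. I would obtain this either by citing an existing result on the hyperdegrees of notations for $\omega_1^\mrm{CK}$, or by running a perfect-tree forcing above $\mathcal O$ whose conditions are finite partial well-orderings extendable to a presentation of $\omega_1^\mrm{CK}$, and following a sufficiently generic branch that meets every dense set witnessing a diagonalization requirement $\Phi_e^X\neq A$. All other ingredients (admissibility of $\omega_1^X$, the spectrum formula for ordinals, and \cref{cor:basislo}) are classical or already developed in the paper, so this forcing argument is the only genuinely new step.
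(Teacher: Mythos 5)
Your reduction is sound and its architecture matches the paper's: part one via \cref{cor:basislo} plus the admissibility of $\omega_1^{\mathbf d}$ (your version of this step is actually more explicit than the paper's), and part two by exhibiting reals in $DgSp_{\cong}(\omega_1^{\mrm{CK}})$ whose common Turing lower bounds exclude any putative least element. The genuine gap is that the one step carrying all the difficulty --- the ``Spector-style statement'' that for every non-hyperarithmetic $A$ there is an $X$ with $\omega_1^X>\omega_1^{\mrm{CK}}$ and $A\not\leq_T X$ --- is precisely the step you do not prove. ``Citing an existing result'' without naming one is a placeholder, and the forcing sketch offered in its stead does not survive scrutiny: for a forcing whose conditions are merely finite partial orders ``extendable to a presentation of $\omega_1^{\mrm{CK}}$'' (which is every finite partial order), a sufficiently generic real will not have order type $\omega_1^{\mrm{CK}}$ at all, and for the natural perfect-tree or Cohen-style variants a sufficiently generic $X$ \emph{preserves} $\omega_1^{\mrm{CK}}$, i.e., satisfies $\omega_1^X=\omega_1^{\mrm{CK}}$. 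Reconciling ``$X$ computes a copy of $\omega_1^{\mrm{CK}}$'' with ``$X$ avoids the cone above $A$'' is the actual content of the proposition and is not a routine genericity argument. The paper discharges it by citing Goncharov, Knight, Harizanov, and Shore: the degrees computing a copy of $\omega_1^{\mrm{CK}}$ are exactly the degrees of maximal well-ordered initial segments of the Harrison ordering $\omega_1^{\mrm{CK}}(1+\eta)$, which coincide with the degrees computing $\Pi^1_1$ paths through Kleene's $\mc O$, and that family is known to contain a minimal pair; since $\mbf 0$ is not in the spectrum, a least element is impossible. You should either cite that result explicitly or prove it, not gesture at it.

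A smaller point: at the boundary case $r(\mc L)=\omega_1^{\omega_1^{\mrm{CK}}}$ your admissibility argument only yields $r(\mc L)\leq\omega_1^{\mathbf d}$, not the strict inequality that \cref{thm:hrXcom} requires (for instance $\omega_1^{\mc O}$ equals the next admissible above $\omega_1^{\mrm{CK}}$, so $\deg(\mc O)$ would lie in $DgSp_{\cong}(\omega_1^{\mrm{CK}})$ but not in $DgSp_{\biemb}(\mc L)$). This is arguably a defect of the proposition as stated rather than of your argument, but as written your ``yields $r(\mc L)<\omega_1^{\mathbf d}$'' is a non sequitur in that case.
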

\begin{proof}
  That $DgSp_\biemb(\mc L)=DgSp_{\cong}(\omega_1^{\mrm{CK}})$ follows from \cref{cor:basislo} because $r(\mc L)<\omega_1^{\mrm{CK}}$ and therefore every linear ordering of order type $\omega_1^{\mrm{CK}}$ can compute a linear ordering of order type $r(\mc L)$.

  Goncharov, Knight, Harizanov, and Shore~\cite{goncharov2004} characterized the degrees that compute maximal well ordered initial segments of the Harrison ordering which has order type $\omega_1^{\mrm CK}(1+\eta)$. Let $\mf H$ be the family of these degrees. They showed that $\mf H$ coincides with the family of degrees that compute a $\Pi_1^1$ path through Kleene's $\mc O$. The family of this degrees on the other hand does not contain a minimal element, in particular, it contains a minimal pair of Turing degrees.

  Now, clearly every maximal well ordered initial segment of $\omega_1^{\mrm{CK}}(1+\eta)$ is of order type $\omega_1^{\mrm{CK}}$ and therefore $\mf H\subseteq DgSp_{\cong}(\omega_1^{\mrm CK})$. Note that this already implies that $DgSp_{\cong}(\omega_1^{\mrm{CK}})$ does not contain a least element as $\omega_1^{\mrm{CK}}$ does not have a computable copy and $\mf H$ contains a minimal pair. Nevertheless, we show the other inclusion as well, i.e., $DgSp_{\cong}(\omega_1^{\mrm CK})\subseteq \mf H$. To see this let $\mc A$ be of order type $\omega_1^{\mrm{CK}}$; uniformly partition $\omega$ in disjoint infinite, coinfinite sets $A_i$, and fix a computable ordering $\leq_I$ of order type $1+\eta$ on the natural numbers. Since our sequence of sets $A_i$ is uniform and computable we get computable bijections $f_i:A_i\ra A$. Define an ordering $\leq$ on $\omega$ by
  \[ x\leq y\LR\left( x,y\in A_i\text{ and } f(x)\leq^\A f(y)\right)\text{ or } \left(x\in A_i, y\in A_j, i\neq j \text{ and } i\leq_I j\right).\]
  The ordering defined by $\leq$ has order-type $\omega_1^{\mrm{CK}}(1+\eta)$ and $deg(\leq)=deg(\A)$. Therefore, $DgSp_{\cong}(\omega_1^{\mrm{CK}})\subseteq \mf H$.
\end{proof}

\section{Strongly locally finite graphs}\label{sec:bespectraslfg}
A graph $\mc G$ is \emph{strongly locally finite} if it is the disjoint union of finite graphs, or, equivalently, if all of its connected components are finite. In what follows let $\mf F=\langle F_i\rangle_{i\in \omega}$ be a Friedberg enumeration of the finite connected graphs. We may assume without loss of generality that $\mf F$ is such that we can compute the size $|F_i|$ of every graph $F_i$ uniformly in $i$. Given $x\in \mc G$, let $[x]_{\mc G}$ be the atomic diagram of the component of $x$ and denote by $\ulcorner [x]_\mathcal{G}\urcorner$ the number $i$ such that $\ulcorner [x]_\mathcal{G}\urcorner=F_i$ (if $\mc G$ is clear from the context we omit the subscript).

The \emph{trace} of a graph is the set of indices of finite graphs embeddable into $\mc{G}$, i.e.,
\[ tr(\mc G)=\{ i: F_i\embeds \mc G\}.\]

 The components of $\mc G$ form a preordering $P_\G$ under embeddability, i.e., for $x,y\in \mc G$
\[ [x]\leq_{P_\G} [y]:\LR [x] \embeds [y].\]
We denote by $c(\mc G)$ the \emph{set of components} of $\mc G$, i.e.,
\[c(\mc G)=\{ i : F_i \text{ is isomorphic to a component of } \mc G\}.\]
A component of $\mc G$ is \emph{open} if it belongs to an infinite ascending chain of $P_{\mc G}$, and $\op({\mc G})$ is the subset of $c(\mc G)$ containing all open components of $\mc G$.

We first state some computability theoretic facts about the relations introduced above.
\begin{proposition}\label{lem:slfgfacts}
  Given a strongly locally finite graph $\mc G$ and $x,y\in G$,
  \begin{enumerate}
    \item\label{it:slfg1}$y\in [x]_{\mc G}$, $tr(\mc G)$ are $\Sigma^{\mc G}_1$,
    \item\label{it:slfg2}and $[x]\embeds [y]$, $|[x]|\leq |[y]|$, $[x]\cong [y]$, $c(\mc G)$ are $\Sigma^{\mc G}_2$.
  \end{enumerate}
\end{proposition}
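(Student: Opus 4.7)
The plan is to bound the complexity of each predicate by the existence of finite witnesses in $\mc G$, exploiting that every component of $\mc G$ is finite. For item (1), $y\in[x]_{\mc G}$ is witnessed by a finite path from $x$ to $y$; since the edge relation is atomic, this is $\Sigma_1^{\mc G}$. Likewise, $i\in tr(\mc G)$ is witnessed by a single tuple in $G$ whose induced subgraph is isomorphic to $F_i$, so $tr(\mc G)$ is $\Sigma_1^{\mc G}$.

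For item (2), the key technical observation is that the cardinality bound $|[x]|\leq k$ is $\Pi_1^{\mc G}$: its negation asserts the existence of $k+1$ distinct elements each joined to $x$ by a finite path, which is $\Sigma_1^{\mc G}$. Using this, the predicate ``$F_i\cong[x]$'' becomes a Boolean combination of $\Sigma_1^{\mc G}$ predicates, since it holds exactly when $F_i\embeds[x]$ (a $\Sigma_1^{\mc G}$ statement: exhibit a tuple in $\mc G$ forming a copy of $F_i$ whose vertices are path-connected to $x$) together with $|[x]|\leq |F_i|$ (a $\Pi_1^{\mc G}$ statement since $|F_i|$ is computable from $i$).

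Each of the four bounds in item (2) then follows by placing a single existential quantifier over $\omega$ or over $G$ in front of such a Boolean combination. Indeed, $[x]\cong[y]\LR\exists i\,(F_i\cong[x]\land F_i\cong[y])$, and $[x]\embeds[y]\LR\exists i\,(F_i\cong[x]\land F_i\embeds[y])$, where $F_i\embeds[y]$ is $\Sigma_1^{\mc G}$ by the same argument used for $F_i\embeds[x]$. Also $|[x]|\leq|[y]|\LR\exists k\,(|[x]|\leq k\land |[y]|\geq k)$, where $|[y]|\geq k$ is $\Sigma_1^{\mc G}$ (exhibit $k$ distinct vertices path-connected to $y$), and $i\in c(\mc G)\LR\exists x\,(F_i\cong[x])$. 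Each right-hand side is an existential quantifier over a Boolean combination of $\Sigma_1^{\mc G}$ predicates, hence $\Sigma_2^{\mc G}$.

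The only pitfall to avoid is trying to compute the component $[x]$ as a set directly: certifying that no further neighbor exists is already $\Pi_1^{\mc G}$, and processing the resulting set would push the complexity higher than claimed. The clean route, used above, is to route everything through the Friedberg enumeration $\mf F$, replacing ``knowing $[x]$'' with ``knowing an index $i$ with $F_i\cong[x]$,'' which is accessible at the $\Sigma_2^{\mc G}$ level by the cardinality lemma.
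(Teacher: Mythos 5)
Your proof is correct and follows essentially the same route as the paper: membership in a component and the trace are witnessed by finite tuples via existential diagrams, and the $\Sigma_2$ bounds all come from pinning down a component's size/isomorphism type, which costs exactly one extra quantifier. The only (cosmetic) difference is that you isolate $|[x]|\leq k$ as $\Pi_1^{\mc G}$ and route isomorphism through the Friedberg index $i$ with $F_i\cong[x]$, whereas the paper writes $|[x]|=n$ directly as a single $\Sigma_2$ formula and uses $[x]\cong[y]\LR[x]\embeds[y]\land[y]\embeds[x]$; both amount to the same counting argument.
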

\begin{proof}
  Ad \cref{it:slfg1}: For $x\in \mc G$, $[x]_{\mc G}$ is definable by the following $\Sigma_1$ formula.
  \[ y\in [x]_{\mc G} \LR \bigvee_{n\in \omega} \exists u_1,\dots u_n \bigwedge_{1\leq i\neq j\leq n} u_i E u_j\]
  Given $x\in \mc G$ with $|[x]| = n$, let $D([x])(x_1,\dots, x_n)$ be the formula obtained by replacing every constant in the atomic diagram of $[x]$ by a variable. Note that given $n$ we can computably define $D([x])(x_1,\dots, x_n)$ and that for $F_i$ we can obtain $n$ computably. Thus the trace of $\mc {G}$ is definable by the following $\Sigma_{1}$ formula.
  \[ x\in tr(\mc G) \LR \exists x_1,\dots , x_n\  D(F_x)(x_1\dots x_n)\]
  Ad \cref{it:slfg2}: In general, given $x\in \mc G$ the size of its component $[x]$ is $\Sigma_2$ as
  \[ |[x]|=n \LR \exists x_1,\dots x_{n} \bigvee_{1\leq i \leq n} x_i\in [x] \land \forall y (\bigvee_{1\leq i \leq n} x_i\neq y \ra \bigvee_{1\leq i \leq n}\neg x_i E y).\]
  Then
  \[ [x]\embeds [y]\LR \bigvee_{n\in\omega} |[x]|=n \land \exists y_1,\dots y_n D_\exists([x])(y_1,\dots,y_n),\]
  which is $\Sigma_2$.
  Thus also $|[x]|\leq |[y]|$ is $\Sigma_2$ and $[x]\cong [y]\LR [x]\embeds [y] \land [y]\embeds [x]$ as $[x]$ and $[y]$ are finite; hence, it is also $\Sigma_2$. By definition, $x\in c(\mc G)$ if and only if $\exists y\in \mc G\ F_x \cong [y]$ which by the above arguments is $\Sigma_2$.
\end{proof}
\begin{definition}
  A graph $\mathcal{G}$ is \emph{open-ended} if every component of $\mc G$ is open.
\end{definition}
We say that a graph $S^{\mc G}$ is the \emph{skeleton} of $\mc G$ if $S^{\mc G}\cong \bigcup_{i\in tr(\mc G)} F_i$. It is not hard to see that two bi-embeddable graphs $\mc A$, $\mc B$ have the same trace, and thus the same skeleton. For open-ended strongly locally finite graphs the skeletons form a basis.
\begin{theorem}\label{th:openendedbebasis}
  Let $\mc G$ be an open-ended strongly locally finite graph, then $S^{\mc G}$ is a b.e.\ basis of $\mc G$.
\end{theorem}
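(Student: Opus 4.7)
My plan is to prove the theorem in two main steps: first establishing that $S^{\mc G}$ and $\mc G$ are bi-embeddable, then showing equality of the spectra.

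For the bi-embeddability, both directions are component-by-component greedy constructions that exploit open-endedness. To see $\mc G \embeds S^{\mc G}$, enumerate the components $C_0, C_1, \dots$ of $\mc G$. By open-endedness each $C_n$ lies in an infinite ascending $P_{\mc G}$-chain, hence embeds into infinitely many components of $\mc G$, and therefore into $F_j$ for infinitely many $j \in tr(\mc G)$. At stage $n$ I pick some $j_n \in tr(\mc G)$ not yet used and an embedding of $C_n$ into $F_{j_n}$; the disjoint union of these component-wise embeddings is the desired embedding of $\mc G$ into $S^{\mc G}$. Symmetrically, to show $S^{\mc G} \embeds \mc G$ enumerate $tr(\mc G)$ as $\{i_0,i_1,\dots\}$; each $F_{i_n}$ embeds into some component $C$ of $\mc G$, and openness of $C$ forces $F_{i_n}$ to embed into infinitely many components of $\mc G$, so I can match each $F_{i_n}$ to a distinct component of $\mc G$ greedily.

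For the spectrum equality, $DgSp_{\cong}(S^{\mc G}) \subseteq DgSp_{\biemb}(\mc G)$ is immediate from $S^{\mc G} \biemb \mc G$. For the converse, fix $\mc H \biemb \mc G$. Bi-embeddability preserves the trace (as observed just before the theorem), so $tr(\mc H) = tr(\mc G)$, and this set is $\Sigma^{\mc H}_1$ by \cref{lem:slfgfacts}. Open-endedness of $\mc G$ makes $tr(\mc G)$ infinite, so $\mc H$ uniformly computes a one-to-one enumeration $i_0,i_1,\dots$ of $tr(\mc H)$. I would then build an $\mc H$-computable graph $\mc B$ in stages by adding, at stage $s$, a fresh disjoint copy of $F_{i_s}$ on the least unused natural numbers. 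By construction $\mc B \cong S^{\mc G}$ and $\mc B \leq_T \mc H$, so some copy of $S^{\mc G}$ has degree at most $\deg(\mc H)$. Since $S^{\mc G}$ contains connected components of unboundedly many finite sizes, it is not automorphically trivial, so by Knight's theorem $DgSp_{\cong}(S^{\mc G})$ is upward closed, and I conclude $\deg(\mc H) \in DgSp_{\cong}(S^{\mc G})$.

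The main obstacle is the bi-embeddability step: open-endedness is doing the real work, as otherwise a single $F_i$ in the skeleton might have to absorb arbitrarily many $\mc G$-components sharing the isomorphism type $F_i$, or conversely an $F_i$ might admit too few candidate targets in $\mc G$ to be placed disjointly. The greedy scheme above converts the openness hypothesis into precisely the surplus of candidates needed to realize both injective component-level assignments.
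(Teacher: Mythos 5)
Your proposal is correct and follows essentially the same route as the paper: a greedy component-by-component matching (using open-endedness to guarantee infinitely many disjoint targets) to establish $\mc G \biemb S^{\mc G}$, followed by an $\mc H$-computable stage-wise construction of $S^{\mc G}$ from the $\Sigma_1^{\mc H}$ enumeration of $tr(\mc H)=tr(\mc G)$. Your explicit appeal to Knight's upward-closure theorem to pass from ``$\mc H$ computes a copy of $S^{\mc G}$'' to ``$\deg(\mc H)\in DgSp_{\cong}(S^{\mc G})$'' is a detail the paper leaves implicit in its Muchnik-reducibility formulation, and is a welcome addition.
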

\begin{proof}
  We first show that $\mc G$ and $S^{\mc G}$ are bi-embeddable given that $\mc G$ is open-ended. Given enumerations of the components of $\mc G$ and $S^{\mc G}$, say we have defined an embedding $\mu$ on the first $s$ components of the enumeration of $\mc G$ and want to define it for the component with index $s+1$ in the enumeration. As $\mc G$ is open-ended, so is $S^{\mc G}$; thus, there is a component which is disjoint from the range of $\mu$ and in which the component with index $s+1$ embeds; define $\mu$ accordingly. It is then not hard to see that in the limit $\mu$ is an embedding of $\mc G$ in $S^{\mc G}$. By the same argument we can embed $S^{\mc G}$ in $\mc G$.

  By \cref{prop:muchnik}, it remains to show that $S^{\mc G}$ is minimal with respect to Muchnik reducibility, i.e., that every $\mc A\biemb \mc G$ computes a copy of $S^{\mc G}$. By \cref{lem:slfgfacts}, $tr(\mc A)$ is $\Sigma_1^{\mc A}$.
  Let $W_e^{\mc A}=tr(\mc A)$ and $W_{e,s}^{\mc A}$ the approximation to $W_e^{\mc A}$ at stage $s$. We construct the copy of $S_{\mc G}$ in stages. At every stage $s$ check if any $i<s$ enters $W_{e,s}^{\mc A}$ and if so build a component isomorphic to $F_i$ using elements bigger than $s$ not yet used during the construction.\footnote{We assume without loss of generality that no $i$ may enter $W_{e,s}$ at a stage $s$ smaller than $i$}
  As the construction is $\mc A$-computable and $tr(\mc A)=tr(\mc G)$, the constructed structure is an $\mc A$-computable copy of $S^{\mc G}$.
\end{proof}

Notice that we can reformulate \cref{th:openendedbebasis} as follows. For any open-ended graph $\G$, we have that
\begin{equation*}
\Spec_{\approx}(\G)=\set{\deg(Y) :\ tr(\G) \mbox{ is c.e.\ in $Y$}}.
\end{equation*}

This is close to the definition of \emph{enumeration degree} of a structure $\S$ as given by Montalb\'an~\cite{montalban2017} in the spirit of Knight~\cite{knight1998}.

\begin{definition}\label{def:enumdegree}
A structure $\S$ has \emph{enumeration degree} $X\subseteq \omega$ if the following holds
\[
\Spec_{\cong}(\S)=\set{\deg(Y) :\ X \mbox{ is c.e.\ in $Y$}}.
\]
\end{definition}
Related to this is the notion of the jump degree of a structure.
\begin{definition}\label{def:jumpdegree}
  A structure $\S$ has jump degree $X\subseteq \omega$ if $\deg(X)$ is the least degree in \[\Spec_{\cong}'(\S)=\{\mbf{d'}:\mbf{d}\in \Spec(\A)\}\]
  The set $\Spec_{\cong}'(\S)$ is often called the \emph{jump spectrum} of $\S$.
\end{definition}
Coles, Downey, and Slaman~\cite{coles2000} showed that for any set $X\subseteq \omega$ the set $\{ \mbf{d}': X \text{ is c.e.\ in }\mbf{d}\}$ has a minimum. It follows from this that a structure has jump degree if it has enumeration degree.

Examples of classes of structures always having an enumeration degree are
algebraic fields (see Frolov, Kalimullin, and Miller~\cite{frolov2009}) and connected, finite-valence, pointed graphs  (see Steiner~\cite{steiner2013}). Bi-embeddability spectra of open-ended graphs are therefore similar to isomorphism spectra of structures in these classes. 

\begin{theorem}\label{thm:openendedthm}
\begin{enumerate}
\item\label{it:openendednotcone} For every $X\subseteq \omega$ there is an open ended graph $\G$ such that $tr(\G)\equiv_e X$.
\item For all open-ended $\G$, $\Spec_{\approx}'(\G)=\set{\mathbf{d}' :\ \mathbf{d}\in \Spec_{\approx}(\G)}$ is a cone of degrees.
\end{enumerate}
\end{theorem}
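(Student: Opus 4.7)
\noindent\emph{Proof proposal.}
For \cref{it:openendednotcone}, the plan is to code $X$ into $\G$ via structurally distinguishable finite gadgets. Fix a computable injection $c\colon\omega\ra\omega$ so that $F_{c(n)}$ is a cycle of length $n+3$, and for each $n,k\in\omega$ let $G_{n,k}$ be $F_{c(n)}$ with a simple path of length $k$ attached at a fixed vertex; then $G_{n,0}=F_{c(n)}$ and each $G_{n,k}$ embeds properly into $G_{n,k+1}$. Define $\G$ to be the disjoint union of the $G_{n,k}$ for $n\in X$ and $k\in\omega$. Every component of $\G$ belongs to the strictly ascending chain $G_{n,k}\embeds G_{n,k+1}\embeds\cdots$ in $P_\G$, so $\G$ is open-ended. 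Since the only cycle of any $G_{n,k}$ has length $n+3$, we get $c(n)\in tr(\G)\LR n\in X$, which together with the computability of $c$ yields $X\leq_e tr(\G)$. Conversely, the set of indices of finite subgraphs of $\{G_{n,k}\}_{k\in\omega}$ is c.e.\ uniformly in $n$, so an enumeration of $X$ lets us enumerate $tr(\G)$ as the union of these families, giving $tr(\G)\leq_e X$.

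For the second item, I would combine \cref{th:openendedbebasis} with two classical results. The reformulation of \cref{th:openendedbebasis} gives $\Spec_\biemb'(\G)=\{\mbf d' : tr(\G)\text{ is c.e.\ in }\mbf d\}$. By the Coles--Downey--Slaman theorem cited above, this set has a minimum $\mbf m$, realized by some $\mbf d_0$ with $tr(\G)$ c.e.\ in $\mbf d_0$ and $\mbf d_0'=\mbf m$. To upgrade ``has a minimum'' to ``is a cone'', I would invoke relativized Friedberg jump inversion: for every $\mbf c\geq\mbf d_0'$ there exists $\mbf d\geq\mbf d_0$ with $\mbf d'=\mbf c$, and since $tr(\G)$ remains c.e.\ in $\mbf d$, the degree $\mbf c=\mbf d'$ lies in $\Spec_\biemb'(\G)$; combined with minimality of $\mbf m$, this yields $\Spec_\biemb'(\G)=\{\mbf e : \mbf e\geq\mbf m\}$.

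The main technical point in (1) is ruling out spurious indices in $tr(\G)$: the cycle-plus-tail gadgets make this transparent, since whether a finite graph embeds into some $G_{n,k}$ depends only on the length of its longest cycle---which, if present, must equal $n+3$ because the only cycle of any $G_{n,k}$ is its attached $F_{c(n)}$---and on its acyclic portion fitting into a long enough tail. In (2), the only non-routine step is the appeal to relativized jump inversion, which is precisely what turns the Coles--Downey--Slaman minimum into the bottom of a cone.
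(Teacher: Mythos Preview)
Your proposal is correct. For part~(2) you follow the paper's line---identify $\Spec_\biemb'(\G)$ with $\{\mbf d':tr(\G)\text{ is c.e.\ in }\mbf d\}$ via \cref{th:openendedbebasis} and then invoke Coles--Downey--Slaman---but you spell out the relativized Friedberg jump inversion that upgrades ``has a minimum'' to ``is an upper cone'', a step the paper leaves implicit.

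For part~(1) you take a genuinely different route. The paper simply lets $\G$ consist of one cycle of length $n$ for each $n\in X$ and reads off $tr(\G)\equiv_e X$ from the fact that a finite connected graph embeds into a cycle of length $n$ only if its unique cycle (if any) also has length $n$. Your construction instead attaches tails of every length to each such cycle. What this buys you is that your $\G$ is visibly open-ended: every component $G_{n,k}$ sits in the strictly ascending chain $G_{n,k}\embeds G_{n,k+1}\embeds\cdots$ inside $P_\G$. By contrast, in the paper's bare-cycle graph no component embeds into a distinct component, so it is not open-ended in the sense of the paper's own definition; your gadget quietly repairs this. The cost is a slightly longer verification of $tr(\G)\leq_e X$, but your observation that the set of indices of connected subgraphs of $\{G_{n,k}\}_{k\in\omega}$ is c.e.\ uniformly in $n$ handles it cleanly, since every $F_i\in tr(\G)$ is connected and hence lands in a single component.
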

\begin{proof}
The idea of the proof is similar to that given in~\cite[Corollary 1]{frolov2009}.

\begin{enumerate}
\item Let $X\subseteq \omega$ and define $\G$ to be the graph conisting of a cycle of length $n$ for every $n\in X$.

We have $tr(\G)\equiv_e X$. Indeed, to enumerate $X$ from an enumeration of  $tr(\G)$, enumerate $tr(\G)$ and for every $x\in tr(\G)$ check in a c.e.\ way if $F_x$ is a cycle. If so enumerate the length of the cycle. Clearly this is an enumeration of $X$. On the other hand given an element $x\in X$, consider the trace of the cycle of length $x$ and enumerate it. By \cref{lem:slfgfacts} this is c.e. Thus, given an enumeration of $X$ we can produce an enumeration of $tr(\G)$.
\item Given an open-ended $\G$, by the above mentioned result by Coles, Downey, and Slaman~\cite{coles2000}, the set of jumps of degrees enumerating $tr(\G)$ has a minimum. By \cref{th:openendedbebasis} this is $\Spec_{\approx}'(\G)$.
\end{enumerate}
\end{proof}
\begin{corollary}
  There is a open-ended graph such such that $\Spec_{\approx}(\G)$ does not have a least element.
\end{corollary}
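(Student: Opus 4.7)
The plan is to apply \cref{thm:openendedthm}(\ref{it:openendednotcone}) to a carefully chosen set $X\subseteq\omega$ and then read off the spectrum via \cref{th:openendedbebasis}. For any $X$, the theorem furnishes an open-ended strongly locally finite graph $\G$ with $tr(\G)\equiv_e X$, and tracing through the proof of \cref{th:openendedbebasis} one sees that
\[
\Spec_{\approx}(\G)=\set{\deg(Y) : X \text{ is c.e.\ in }Y}=:\mathcal{C}_X.
\]
The corollary therefore reduces to exhibiting a single $X$ whose ``Turing--enumeration cone'' $\mathcal C_X$ has no least element.

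To produce such an $X$, the plan is to exploit a mismatch between the Turing and enumeration infima. Pick Turing degrees $\mathbf a,\mathbf b$ forming a minimal pair, i.e.\ $\mathbf a\wedge_T\mathbf b=\mathbf 0$, whose meet in the enumeration degrees is a \emph{nonzero} (and necessarily non-total) e-degree $\mathbf c_e$. Such configurations are a well-known phenomenon in the $\Sigma^0_2$ enumeration degrees, available by constructions in the style of Ahmad. Let $X$ be any set of enumeration degree $\mathbf c_e$. Since $\mathbf c_e\leq_e\mathbf a_e$ and $\mathbf c_e\leq_e\mathbf b_e$, the set $X$ is c.e.\ in every representative of both $\mathbf a$ and $\mathbf b$, so $\mathbf a,\mathbf b\in\mathcal C_X$; on the other hand, $\mathbf c_e>\mathbf 0_e$ forces $X$ not to be c.e.

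If $\mathcal C_X$ had a least element $\mathbf d$, then $\mathbf d\leq_T\mathbf a$ and $\mathbf d\leq_T\mathbf b$, so $\mathbf d\leq_T\mathbf a\wedge_T\mathbf b=\mathbf 0$, contradicting the non-c.e.\ ness of $X$. Hence $\Spec_{\approx}(\G)$ has no least element. The main obstacle is the enumeration-theoretic input, namely producing a minimal Turing-pair whose e-meet is nontrivial; granted this classical fact, the corollary is an immediate combination of \cref{thm:openendedthm,th:openendedbebasis}.
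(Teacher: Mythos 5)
Your reduction is the same as the paper's: combine \cref{thm:openendedthm}(1) with the reformulation of \cref{th:openendedbebasis} to get $\Spec_{\approx}(\G)=\{\deg(Y): X \text{ is c.e.\ in } Y\}$, and then find one $X$ for which this class has no least element. Where you diverge is the final step. The paper simply takes $X$ of non-total enumeration degree and invokes the standard fact (essentially Selman's theorem: a least $\mathbf d$ with $X$ c.e.\ in $\mathbf d$ would satisfy $X\equiv_e D\oplus \overline{D}$, making $X$ total) that the degrees enumerating $X$ have a least element if and only if $X$ is of total e-degree. You instead ask for a Turing minimal pair $\mathbf a,\mathbf b$ with a common non-c.e.\ ``enumeration lower bound'' $X$; granting that, your concluding argument (any least element would lie below $\mathbf a\wedge_T\mathbf b=\mathbf 0$, forcing $X$ to be c.e.) is valid.

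The caveat is that your enumeration-theoretic input is stated loosely and left unjustified, and this is the entire content of the corollary. Meets need not exist in the enumeration degrees, so ``the meet of $\mathbf a$ and $\mathbf b$ in the e-degrees is $\mathbf c_e$'' should be replaced by what you actually use: a nonzero e-degree below both images of $\mathbf a$ and $\mathbf b$ under the embedding of the Turing degrees into the e-degrees. The attribution to Ahmad is also off --- her work concerns (non)splitting in the local e-degrees, not non-preservation of minimal pairs under the embedding. The configuration you need does exist (e.g., take $X$ of quasi-minimal e-degree and use Rozinas's theorem that every e-degree is the infimum of two total e-degrees above it; quasi-minimality then forces the corresponding Turing degrees to form a minimal pair), but as written this step is a gap, and it is strictly heavier machinery than the paper's one-line appeal to non-totality, which requires no minimal pair at all.
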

\begin{proof}
  Take $X\subseteq \omega$ to be non-total. It follows that the set of Turing degrees enumerating $X$ does not have a least element. Then by (1) of \cref{thm:openendedthm} and the observation after \cref{th:openendedbebasis} we get that there is $\G$ such that
  \[ DgSp_{\approx}(\G)=\{ deg(Y): tr(\G) \text{ is c.e.\ in Y}\}=\{ deg(Y): X \text{ is c.e.\ in Y }\}.\]
  Therefore $\Spec_{\approx}(\G)$ does not have a least element.
\end{proof}
It is immediate from the construction in \cref{it:openendednotcone} that $\mc G\equiv_T tr(\mc G)\equiv_T D$. Thus $\mc G$ has enumeration degree with respect to its bi-embeddability type and, as it is b.e.\ trivial, also with respect to its isomorphism type.
\section{Open question}
We close by stating a few open question which make for interesting further research.
\begin{question}
  Is there a bi-embeddability spectrum that is not an isomorphism spectrum and vice versa?
\end{question}
\begin{question}
  Is there a structure having finite bi-embeddability basis greater than $1$?
\end{question}
\begin{question}
  Is there a structure having countable bi-embeddability basis?
\end{question}
\begin{question}
  Is there a structure $\A$ and incomparable Turing degrees $\mbf a$ and $\mbf b$ such that $DgSp_{\biemb}(\A)=\{\mbf d\geq\mbf a\}\cup\{\mbf d\geq\mbf b\}$?
\end{question}
\printbibliography
\end{document}